\newtheorem{thm}{\textbf Theorem}[section]
\newtheorem{lem}[thm]{\textbf Lemma}
\newtheorem{rem}[thm]{\textbf Remark}
\newtheorem{cor}[thm]{\textbf Corollary}
\newtheorem{prop}[thm]{\textbf Proposition}
\newtheorem{defin}[thm]{\textbf Definition}
\newcommand{\be}{\begin{eqnarray*}}
\newcommand{\ee}{\end{eqnarray*}}
\begin{document}

\title{\bf Stochastic Calculus for Markov Processes Associated with Non-symmetric Dirichlet Forms}
  \author{Chuan-Zhong Chen\\
Department of Mathematics\\
Hainan Normal University\\
Haikou, 571158, China\\
ccz0082@yahoo.com.cn\\
\\
Li Ma\\
Department of Mathematics\\
Hainan Normal University\\
Haikou, 571158, China\\
mary\hskip -0.09cm\_\hskip 0.05cm henan@yahoo.com.cn\\
\\
Wei Sun
            \\ Department of Mathematics and Statistics\\
             Concordia University\\
             Montreal, H3G 1M8, Canada\\
             wsun@mathstat.concordia.ca}
   \date{}
\maketitle

\begin{abstract}
\noindent Nakao's stochastic integrals for continuous additive functionals of zero energy are extended from the symmetric Dirichlet forms setting to the non-symmetric Dirichlet forms setting. It\^{o}'s formula in terms of the extended stochastic integrals is obtained.\vskip 0.5cm \noindent {\bf Keywords:} Non-symmetric Dirichlet form, Fukushima's decomposition, continuous additive functional of zero energy, stochastic integral, It\^{o}'s formula.
\vskip 0.5cm \noindent AMS Subject Classification: 31C25, 60J25
\end{abstract}
\section[short title]{Introduction}

Let $E$ be a
metrizable Lusin space (i.e., topologically isomorphic to a Borel subset of a
complete separable metric space) and $m$ be a $\sigma$-finite positive
measure on its Borel $\sigma$-algebra ${\cal B}(E)$. Suppose that $({\cal E},D({\cal
E}))$ is a quasi-regular (non-symmetric) Dirichlet form on $L^2(E;m)$ with
associated Markov process $((X_t)_{t\ge 0}, (P_x)_{x\in
E_{\Delta}})$ (we refer the reader to \cite{MR92} and \cite{Fu94} for notations and terminologies of this paper). If $u\in
D({\cal E})$, then there exist unique martingale additive functional
(MAF in short) $M^{[u]}$ of finite energy and continuous
additive functional (CAF in short) $N^{[u]}$ of zero energy
such that
\begin{equation}\label{31}
A_t^{[u]}:=\tilde{u}(X_t)-\tilde{u}(X_0)=M^{[u]}_t+N^{[u]}_t,
\end{equation}
where $\tilde{u}$ is an ${\cal E}$-quasi-continuous $m$-version of
$u$ and the energy of an AF $A:=(A_t)_{t\ge 0}$ is defined to be
$$
e(A):=\lim_{t\rightarrow 0}\frac{1}{2t}E_m[A^2_t]
$$
whenever the limit exists in $[0,\infty]$ (cf. \cite[Theorem VI.2.5]{MR92}). To simplify notation, in the sequel we take $u$ to be its ${\cal E}$-quasi-continuous $m$-version whenever such a version exists.

The aim of this paper is to define an integral $\int_0^tv(X_s)dA_s^{[u]}$ for a suitable class of functions $v$ on $E$. To this end, we need to define an integral $\int_0^tv(X_s)dN_s^{[u]}$. Note that in general $(N^{[u]}_t)_{t\ge 0}$ is not of bounded variation (cf. \cite[Example 5.5.2]{Fu94} for an example) and hence $(A^{[u]}_t)_{t\ge 0}$ is not a semi-martingale. In \cite{N}, Nakao defined stochastic integrals for CAFs and established the corresponding It\^{o}'s formula in the framework of symmetric Dirichlet forms. To define the stochastic integrals, he essentially used the assumption of symmetry and the formulae of Beurling-Deny and LeJan. Although both the Beurling-Deny decomposition and LeJan's transformation rule can be extended to the non-symmetric Dirichlet forms setting, the SPV integrability in the Beurling-Deny decomposition (cf. \cite{HC06} and \cite[Section 4]{MS11}) and the unavailability of the mutual energy measure for the co-symmetric diffusion part of $({\cal E},D({\cal
E}))$ (cf. \cite{HS10} and \cite[Section 4]{MS11}) make it difficult to directly extend Nakao's stochastic integrals to the non-symmetric Dirichlet forms setting.

In the next section, we will define the integral $\int_0^tv(X_s)dN_s^{[u]}$ without using the Beurling-Deny formula. In Section 3, we will establish It\^{o}'s formula in terms of our extended stochastic integrals.

\section [short title]{Definition of stochastic integrals for CAFs}\label{Sec:Fukushima}
\setcounter{equation}{0}

We denote by $(\tilde{\cal E},D({\cal
E}))$ the symmetric part of $({\cal E},D({\cal
E}))$:
$$
\tilde{\cal E}(f,g):=\frac{1}{2}({\cal E}(f,g)+{\cal E}(g,f)),\ \ f,g\in D({\cal
E}).
$$
For $\alpha\ge 0$, we set ${\cal E}_{\alpha}(f,g):={\cal E}(f,g)+\alpha(f,g),\ f,g\in D({\cal
E})$, where $(\cdot,\cdot)$ is the inner product of $L^2(E;m)$.
\begin{lem}
Let $f\in D({\cal E})$. Then there exist unique $f^*\in D({\cal E})$ and $f^{\triangle}\in D({\cal E})$ such that for any $g\in D({\cal E})$,
\begin{equation}\label{1}
{\cal E}_1(f,g)=\tilde{\cal E}_1(f^*,g)
\end{equation}
and
\begin{equation}\label{2}
\tilde{\cal E}_1(f,g)={\cal E}_1(f^{\triangle},g).
\end{equation}
\end{lem}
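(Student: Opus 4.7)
The plan is to treat both identities as representation theorems on the Hilbert space $(D(\mathcal{E}),\tilde{\mathcal{E}}_1)$. The crucial preliminary observation is that $\tilde{\mathcal{E}}_1(h,h)=\mathcal{E}_1(h,h)$ for every $h\in D(\mathcal{E})$, because the antisymmetric part of $\mathcal{E}$ vanishes on the diagonal. Hence $\tilde{\mathcal{E}}_1^{1/2}$ and $\mathcal{E}_1^{1/2}$ induce the same norm on $D(\mathcal{E})$, so $(D(\mathcal{E}),\tilde{\mathcal{E}}_1)$ is a (real) Hilbert space. Throughout the argument I will lean on the weak sector condition, which is part of the definition of a Dirichlet form in \cite{MR92}: there exists $K>0$ such that
\[
|\mathcal{E}_1(h,k)|\le K\,\mathcal{E}_1(h,h)^{1/2}\mathcal{E}_1(k,k)^{1/2}=K\,\tilde{\mathcal{E}}_1(h,h)^{1/2}\tilde{\mathcal{E}}_1(k,k)^{1/2}
\]
for all $h,k\in D(\mathcal{E})$.

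For identity (\ref{1}), I fix $f$ and consider the linear functional $\ell_f:g\mapsto \mathcal{E}_1(f,g)$. The weak sector condition shows that $|\ell_f(g)|\le K\tilde{\mathcal{E}}_1(f,f)^{1/2}\tilde{\mathcal{E}}_1(g,g)^{1/2}$, so $\ell_f$ is bounded on $(D(\mathcal{E}),\tilde{\mathcal{E}}_1)$. The ordinary Riesz representation theorem then produces a unique $f^{*}\in D(\mathcal{E})$ with $\tilde{\mathcal{E}}_1(f^{*},g)=\mathcal{E}_1(f,g)$ for every $g$, which is precisely (\ref{1}).

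For identity (\ref{2}) the target representation is no longer symmetric, so Riesz does not directly apply and I would invoke the Lax--Milgram theorem instead. The data are: the bilinear form $\mathcal{E}_1$ on $(D(\mathcal{E}),\tilde{\mathcal{E}}_1)$, which is continuous by the weak sector condition above and coercive because $\mathcal{E}_1(h,h)=\tilde{\mathcal{E}}_1(h,h)$; and the bounded linear functional $g\mapsto\tilde{\mathcal{E}}_1(f,g)$, whose boundedness on $(D(\mathcal{E}),\tilde{\mathcal{E}}_1)$ is immediate from Cauchy--Schwarz. Lax--Milgram then yields a unique $f^{\triangle}\in D(\mathcal{E})$ with $\mathcal{E}_1(f^{\triangle},g)=\tilde{\mathcal{E}}_1(f,g)$ for all $g\in D(\mathcal{E})$.

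Uniqueness in both cases is already built into the representation theorems, but I would also sketch it directly: if two candidates satisfied (\ref{1}), their difference $h$ would satisfy $\tilde{\mathcal{E}}_1(h,g)=0$ for all $g$, hence $\tilde{\mathcal{E}}_1(h,h)=0$, hence $h=0$; an analogous argument using $\mathcal{E}_1(h,h)=\tilde{\mathcal{E}}_1(h,h)$ handles (\ref{2}). The only real subtlety is recognizing that (\ref{2}) genuinely requires Lax--Milgram rather than Riesz, because the representing form $\mathcal{E}_1$ is non-symmetric; this is the step where the weak sector condition does essential work and where I would anticipate having to be careful in the write-up.
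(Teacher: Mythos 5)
Your proposal is correct. For (\ref{1}) it coincides with the paper's argument: both reduce the claim to the Riesz representation theorem on the Hilbert space $(D({\cal E}),\tilde{\cal E}_1)$, using the weak sector condition to see that $g\mapsto{\cal E}_1(f,g)$ is bounded. For (\ref{2}) the two arguments diverge in packaging rather than in substance. You invoke the Lax--Milgram theorem, checking its hypotheses directly: continuity of ${\cal E}_1$ from the weak sector condition, and coercivity from the identity ${\cal E}_1(h,h)=\tilde{\cal E}_1(h,h)$ (the antisymmetric part vanishes on the diagonal). The paper instead gives a self-contained argument: it introduces the map $Q:w\mapsto w^*$, derives the bound ${\cal E}_1(w,w)\le{\cal E}_1(w^*,w^*)$ from (\ref{1}) and Cauchy--Schwarz, uses it to show that the range of $Q$ is closed, and then shows the range is all of $D({\cal E})$ by an orthogonality/nondegeneracy argument; $f^{\triangle}$ is then defined as the preimage $w$ of $f$ under $Q$. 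That closed-range-plus-density argument is essentially the standard proof of Lax--Milgram specialized to this setting, so the mathematical content is the same; your version is shorter because it cites the theorem as a black box, while the paper's version has the minor advantage of producing the inequality ${\cal E}_1(w,w)\le{\cal E}_1(w^*,w^*)$ as a by-product (inequality (\ref{3}) in the text), which encodes quantitative control of the correspondence $w\mapsto w^*$. Your direct uniqueness sketch is also fine and matches what the paper implicitly uses.
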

\begin{proof} Let $f\in D({\cal E})$. Since $D({\cal E})$ is a Hilbert space w.r.t. the $\tilde{\cal E}^{1/2}_1$-norm, the assertion on $f^*$ is a direct consequence of the weak sector condition and the Riesz representation theorem. For the assertion on $f^{\triangle}$, the uniqueness of $f^{\triangle}$ is obvious. Below we will show that there exists an $f^{\triangle}\in D({\cal E})$ such that (\ref{2}) holds for any $g\in D({\cal E})$. To this end, we consider the map $Q: D({\cal E})\rightarrow D({\cal E})$, $Qw:=w^*$, $w\in D({\cal E})$. We will prove that $Q$ is surjective below. Once this done, one finds that there exists a $w\in D({\cal E})$ such that $f=w^*$. We define $f^{\triangle}:=w$ and then the proof is completed.

Let $w\in D({\cal E})$. Then, by (\ref{1}), we get
$$
{\cal E}_1(w,w)=\tilde{\cal E}_1(w^*,w)\le \tilde{\cal E}^{1/2}_1(w^*,w^*)\tilde{\cal E}^{1/2}_1(w,w),
$$
which implies that
\begin{equation}\label{3}
{\cal E}_1(w,w)\le {\cal E}_1(w^*,w^*).
\end{equation}
By (\ref{3}), one finds that $Q(D({\cal E}))$ is a closed subspace of $D({\cal E})$ w.r.t. the $\tilde{\cal E}^{1/2}_1$-norm. In fact, let $\{w^*_n\}$ be a sequence in $Q(D({\cal E}))$ which converges to $p\in D({\cal E})$ as $n\rightarrow\infty$. By (\ref{3}), there exists a $w\in D({\cal E})$ such that $w_n$ converges to $w$ in $D({\cal E})$ as $n\rightarrow\infty$. Then, for $g\in D({\cal E})$, we have
$$
{\cal E}_1(w,g)=\lim_{n\rightarrow\infty}{\cal E}_1(w_n,g)=\lim_{n\rightarrow\infty}\tilde{\cal E}_1(w^*_n,g)=\tilde{\cal E}_1(p,g),
$$
which implies that $p=w^*$ by (\ref{3}). Hence $Q(D({\cal E}))$ is a closed subspace of $D({\cal E})$.

Finally, we show that $Q(D({\cal E}))=D({\cal E})$. In fact, if $Q(D({\cal E}))\not=D({\cal E})$ then there exists a $q\in D({\cal E})$ such that $q\not=0$ and
$$
{\cal E}_1(h,q)=\tilde{\cal E}_1(h^*,q)=0,\ \ \forall h\in D({\cal E}).
$$
This implies that ${\cal E}_1(q,q)=0$, which is a contradiction. Therefore $Q(D({\cal E}))=D({\cal E})$.\end{proof}

Let $f,g\in D({\cal E})$. We use $\tilde{\mu}_{<f,g>}$ to denote the mutual energy measure of $f$ and $g$ w.r.t. the symmetric Dirichlet form $(\tilde{\cal E},D({\cal
E}))$. Suppose that $u\in D({\cal E})$ and $v\in D({\cal E})_b:=D({\cal E})\cap {\cal B}_b(E)$. It is easy to see that there exists a unique element in $D({\cal E})$, which is denoted by $\lambda(u,v)$, such that
\begin{equation}\label{14}
\frac{1}{2}\int_E vd\tilde{\mu}_{<h,u^*>}=\tilde{\cal E}_1(\lambda(u,v),h),\ \ \forall h\in D({\cal E}).
\end{equation}
\begin{thm}
Let $u\in D({\cal E})$ and $v\in D({\cal E})_b$. Then, for any $h\in D({\cal E})_b$,
\begin{equation}\label{11}
{\cal E}(u,hv)={\cal E}_1(\lambda(u,v)^{\triangle},h)+\frac{1}{2}\int_E hd\tilde{\mu}_{<v,u^*>}+\int_E (u^*-u)hvdm.
\end{equation}
\end{thm}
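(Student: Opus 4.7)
The plan is to reduce the left-hand side $\mathcal{E}(u,hv)$ to a statement purely about the symmetric form $(\tilde{\mathcal{E}},D(\mathcal{E}))$, where a Leibniz-type rule for the mutual energy measure is available, and then convert back using the operations $*$ and $\triangle$ of Lemma 2.1. First I would split off the $L^2$-term and apply (\ref{1}) to get
$$\mathcal{E}(u,hv)=\mathcal{E}_1(u,hv)-\int_E uhv\,dm=\tilde{\mathcal{E}}_1(u^*,hv)-\int_E uhv\,dm=\tilde{\mathcal{E}}(u^*,hv)+\int_E(u^*-u)hv\,dm.$$
This already produces the last term on the right-hand side of (\ref{11}), so the identity is reduced to showing
$$\tilde{\mathcal{E}}(u^*,hv)=\mathcal{E}_1(\lambda(u,v)^{\triangle},h)+\frac{1}{2}\int_E h\,d\tilde{\mu}_{<v,u^*>}.$$

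The key step is to establish the symmetric Leibniz identity
$$\tilde{\mathcal{E}}(u^*,hv)=\frac{1}{2}\int_E v\,d\tilde{\mu}_{<h,u^*>}+\frac{1}{2}\int_E h\,d\tilde{\mu}_{<v,u^*>}.$$
I would derive this by polarizing the defining relation $\int f\,d\tilde{\mu}_{<u>}=2\tilde{\mathcal{E}}(uf,u)-\tilde{\mathcal{E}}(u^2,f)$ for the symmetric energy measure. Writing out
$$\int_E v\,d\tilde{\mu}_{<h,u^*>}=\tilde{\mathcal{E}}(hv,u^*)+\tilde{\mathcal{E}}(u^*v,h)-\tilde{\mathcal{E}}(hu^*,v),$$
$$\int_E h\,d\tilde{\mu}_{<v,u^*>}=\tilde{\mathcal{E}}(vh,u^*)+\tilde{\mathcal{E}}(u^*h,v)-\tilde{\mathcal{E}}(vu^*,h)$$
and adding, the four cross terms cancel in pairs by symmetry of $\tilde{\mathcal{E}}$, leaving $2\tilde{\mathcal{E}}(hv,u^*)$. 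If $u^*$ fails to lie in $L^{\infty}(E;m)$, I would first prove the polarized identity for the truncations $u^*_N:=(-N)\vee u^*\wedge N\in D(\mathcal{E})_b$ and then pass to the limit using $\tilde{\mathcal{E}}_1^{1/2}$-convergence $u^*_N\to u^*$ (consequence of the Markovian property of $(\tilde{\mathcal{E}},D(\mathcal{E}))$) together with the Lipschitz continuity of the mutual energy measure in its arguments in the $\tilde{\mathcal{E}}_1^{1/2}$-norm.

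Finally, I would invoke the defining property (\ref{14}) of $\lambda(u,v)$ to rewrite $\frac{1}{2}\int_E v\,d\tilde{\mu}_{<h,u^*>}=\tilde{\mathcal{E}}_1(\lambda(u,v),h)$, and then apply (\ref{2}) of Lemma 2.1 to convert the right side into $\mathcal{E}_1(\lambda(u,v)^{\triangle},h)$. Combining this with the $L^2$-contribution computed in the opening step yields (\ref{11}).

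The main obstacle is the Leibniz identity for the symmetric form in the case that $u^*$ is unbounded: the three-function polarization formula is textbook material when all entries lie in $D(\mathcal{E})_b$ (so that all the required products remain in $D(\mathcal{E})$), and extending it to $u^*\in D(\mathcal{E})$ requires the truncation-plus-limit argument sketched above. The remaining manipulations—moving between $\mathcal{E}_1$ and $\tilde{\mathcal{E}}_1$ via $*$ and $\triangle$, and invoking (\ref{14})—are straightforward applications of the already-established Lemma 2.1.
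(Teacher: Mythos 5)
Your proof is correct and follows essentially the same route as the paper: split off the $L^2$-term via (\ref{1}), use the polarized Leibniz formula for $\tilde{\mu}_{<\cdot,\cdot>}$ (the paper's identity (\ref{13}), extended by approximation to unbounded $g=u^*$), and then convert back via (\ref{14}) and (\ref{2}). The only difference is that you spell out the truncation argument for unbounded $u^*$, which the paper compresses into the phrase ``by approximation.''
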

\begin{proof}
Let $f,g,w\in D({\cal E})_b$. Then, by \cite[Theorem 5.2.3]{Fu94} (and quasi-homeomorphism, see \cite{CMR}), we get
\begin{eqnarray*}
\int_E fd\tilde{\mu}_{<w,g>}&=&\tilde{\cal E}(w,fg)+\tilde{\cal E}(g,wf)-\tilde{\cal E}(wg,f),\\
\int_E wd\tilde{\mu}_{<f,g>}&=&\tilde{\cal E}(f,wg)+\tilde{\cal E}(g,wf)-\tilde{\cal E}(fg,w).
\end{eqnarray*}
Summing them up, we get
\begin{equation}\label{13}
\int_E fd\tilde{\mu}_{<w,g>}+\int_E wd\tilde{\mu}_{<f,g>}=2\tilde{\cal E}(g,wf).
\end{equation}
Further, by approximation, we can show that (\ref{13}) holds for any $f,w\in D({\cal E})_b$ and $g\in D({\cal E})$.

Let $u\in D({\cal E})$ and $v,h\in D({\cal E})_b$. By (\ref{1}), we get
\begin{equation}\label{12}
{\cal E}(u,hv)=\tilde{\cal E}_1(u^*,hv)-(u,hv)=\tilde{\cal E}(u^*,hv)+(u^*-u,hv).
\end{equation}
By (\ref{13}), we get
\begin{equation}\label{15}
\tilde{\cal E}(u^*,hv)=\frac{\int_E hd\tilde{\mu}_{<v,u^*>}+\int_E vd\tilde{\mu}_{<h,u^*>}}{2}.
\end{equation}
Therefore, (\ref{11}) holds by (\ref{12}), (\ref{15}), (\ref{14}) and (\ref{2}).
\end{proof}

Denote by $A_c^+$ the family of all positive CAFs (PCAFs in short) of $X$.  Define $$A_c^{+,f}:=\{A\in A_c^+\,|\,{\rm the\ smooth\ measure},\ \mu_A,\ {\rm cossreponding\ to}\ A\ {\rm is\ finite}\}$$
and
$$
{\cal N}^*_c:=\left\{N^{[u]}_t+\int_0^tf(X_s)ds+A^{(1)}_t-A^{(2)}_t\,|\,u\in D({\cal E}),f\in L^2(E;m)\ {\rm and}\ A^{(1)},A^{(2)}\in A_c^{+,f}\right\}.
$$
Similar to \cite[Theorem 2.2]{N}, we can prove the following lemma.
\begin{lem}\label{lem2}
If $C^{(1)}, C^{(2)}\in {\cal N}^*_c$ satisfying
$$
\lim_{t\downarrow 0}\frac{1}{t}E_{h\cdot m}[C^{(1)}_t]=\lim_{t\downarrow 0}\frac{1}{t}E_{h\cdot m}[C^{(2)}_t],\ \ \forall h\in D({\cal E})_b,
$$
then $C^{(1)}=C^{(2)}$.
\end{lem}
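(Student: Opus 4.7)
The plan is to adapt Nakao's argument in \cite[Theorem 2.2]{N} to the non-symmetric setting. Observe first that ${\cal N}^*_c$ is closed under subtraction, since $N^{[u_1]}-N^{[u_2]}=N^{[u_1-u_2]}$ and sums of elements of $A_c^{+,f}$ remain in $A_c^{+,f}$ (their smooth measures add). Setting $C:=C^{(1)}-C^{(2)}$ therefore reduces the assertion to showing that if
\[
C_t=N^{[u]}_t+\int_0^t f(X_s)\,ds+A^{(1)}_t-A^{(2)}_t\in{\cal N}^*_c
\]
satisfies $\lim_{t\downarrow 0}\frac{1}{t}E_{h\cdot m}[C_t]=0$ for every $h\in D({\cal E})_b$, then $C=0$.

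My next step is to evaluate the three contributions to the limit separately. Since $M^{[u]}$ is a MAF with $E_x[M^{[u]}_t]=0$ for quasi-every $x$, one has $E_{h\cdot m}[N^{[u]}_t]=(P_tu-u,h)$; passing through the adjoint semigroup $\hat P_t$, the non-symmetric identity $\lim_{t\downarrow 0}\frac{1}{t}(u-P_tu,h)={\cal E}(u,h)$, valid for $u,h\in D({\cal E})$, supplies the contribution $-{\cal E}(u,h)$. Fubini together with strong continuity of $(P_s)$ at $s=0$ yields $\lim_{t\downarrow 0}\frac{1}{t}E_{h\cdot m}[\int_0^t f(X_s)\,ds]=(f,h)$, and the Revuz correspondence gives $\lim_{t\downarrow 0}\frac{1}{t}E_{h\cdot m}[A^{(i)}_t]=\int_E h\,d\mu_{A^{(i)}}$. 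Summing, the hypothesis becomes
\[
{\cal E}(u,h)=(f,h)+\int_E h\,d\mu_{A^{(1)}}-\int_E h\,d\mu_{A^{(2)}},\qquad\forall h\in D({\cal E})_b,
\]
identifying $u$ as a zero-order potential of the signed smooth measure $\nu:=f\cdot m+\mu_{A^{(1)}}-\mu_{A^{(2)}}$.

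To close, the CAF of bounded variation $A^\nu_t:=\int_0^t f(X_s)\,ds+A^{(1)}_t-A^{(2)}_t$ has Revuz measure $\nu$, and I would invoke the non-symmetric analog of the ``$N$-part of a potential'' identity (whose symmetric version is \cite[Theorem 5.4.2]{Fu94}) to conclude $N^{[u]}_t=-A^\nu_t$, whence $C=0$. The hard part will be precisely this last identification in the quasi-regular non-symmetric framework: one must reprove ``$N^{[u]}=-A^\nu$ whenever $u$ is a potential of $\nu$'' using the weak sector condition, the co-semigroup, and the Revuz correspondence for signed smooth measures, all of which are available within \cite{MR92}. By contrast, the three limit computations are routine transfers of their symmetric analogs in \cite{N}.
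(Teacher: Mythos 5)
Your proposal is correct and takes essentially the same route as the paper, whose entire ``proof'' is the remark that the lemma follows as in \cite[Theorem 2.2]{N}: you reduce to a single $C\in{\cal N}^*_c$ with vanishing limits, compute the three contributions to identify $u$ as a potential of $f\cdot m+\mu_{A^{(1)}}-\mu_{A^{(2)}}$, and invoke the non-symmetric analog of \cite[Theorem 5.4.2]{Fu94} (available as \cite[Theorem 5.2.7]{oshima}, which the authors themselves use for exactly this purpose in their Proposition 2.6). The only point to make explicit is that the Revuz-limit identity $\lim_{t\downarrow 0}\frac{1}{t}E_{h\cdot m}[A_t]=\langle\mu_A,\tilde h\rangle$ for general $h\in D({\cal E})_b$ needs its non-symmetric form, but that is precisely the machinery the paper already relies on in Remark 2.5.
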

Let $u\in D({\cal E})$ and $v\in D({\cal E})_b$. Then $\tilde{\mu}_{<v,u^*>}$ is a signed smooth measure w.r.t. $(\tilde{\cal E},D({\cal
E}))$ and hence $({\cal E},D({\cal E}))$.  We use $G(u,v)$ to denote the unique element in  $A_c^+-A_c^+$ that is corresponding to $\tilde{\mu}_{<v,u^*>}$ under the Revuz correspondence between smooth measures of $({\cal E},D({\cal E}))$ and PCAFs of $X$ (cf. \cite[theorem VI.2.4]{MR92}). To simplify notation, we define
$$
\Gamma(u,v)_t:=N^{[\lambda(u,v)^{\triangle}]}_t-\int_0^t\lambda(u,v)^{\triangle}(X_s)ds,\ \ t\ge0.
$$

\begin{defin}\label{00} Let $u\in D({\cal E})$ and $v\in D({\cal E})_b$. We define for $t\ge 0$,
\begin{eqnarray*}
\int_0^tv(X_{s-})dN^{[u]}_s&:=&\int_0^tv(X_s)dN^{[u]}_s\\
&:=&\Gamma(u,v)_t-\frac{1}{2}G(u,v)_t-\int_0^t(u^*-u)v(X_s)ds.
\end{eqnarray*}
\end{defin}

\begin{rem}\label{rem1}

Let $u\in D({\cal E})$ and $v\in D({\cal E})_b$. Then one can check that $\int_0^tv(X_s)dN^{[u]}_s\in {\cal N}^*_c$. By Definition \ref{00}, (\ref{31}), \cite[Theorem 3.4]{f}, \cite[Theorem 5.8(iii)]{MMS} and (\ref{11}), we obtain that
\begin{equation}\label{w2}
\lim_{t\downarrow 0}\frac{1}{t}E_{h\cdot m}\left[\int_0^tv(X_s)dN^{[u]}_s\right]=-{\cal E}(u,hv).
\end{equation}
Therefore, by Lemma \ref{lem2}, $\int_0^tv(X_s)dN^{[u]}_s$ is the unique AF $(C_t)_{t\ge 0}$ in ${\cal N}^*_c$ that satisfies $\lim_{t\downarrow 0}\frac{1}{t}E_{h\cdot m}[C_t]=-{\cal E}(u,hv)$.
\end{rem}

Denote by $(L,D(L))$ the generator of $({\cal E},D({\cal
E}))$. Note that if $u\in D(L)$ then $dN^{[u]}_s=Lu(X_s)ds$. In this case, it is easy to see that for any $v,h\in D({\cal E})_b$,
$$
\lim_{t\downarrow 0}\frac{1}{t}E_{h\cdot m}\left[\int_0^tv(X_s)Lu(X_s)ds\right]=\int_EhvLudm=-{\cal E}(u,hv)
$$
(cf. \cite[Theorem 5.8(vi)]{MMS}). Hence our definition of the stochastic integrals $\int_0^tv(X_s)dN^{[u]}_s$ for $u\in D({\cal E})$ is an extension of the ordinary Lebesgue integrals $\int_0^tv(X_s)Lu(X_s)ds$ for $u\in D(L)$. More generally, we have the following proposition.

\begin{prop} Let $u\in D({\cal E})$ and $v\in D({\cal E})_b$. Suppose that there exist $A^{(1)},A^{(2)}\in A_c^+$ such that $N^{[u]}_t=A^{(1)}_t-A^{(2)}_t$ for $t<\zeta$, where $\zeta$ is the life time of $X$. Then
\begin{equation}\label{a1}
\int_0^tv(X_s)dN^{[u]}_s=\int_0^tv(X_s)d(A^{(1)}_s-A^{(2)}_s)\ {\rm for}\ t<\zeta.
\end{equation}
\end{prop}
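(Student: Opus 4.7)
The plan is to leverage the uniqueness packaged in Remark \ref{rem1}: $\int_0^t v(X_s)\,dN^{[u]}_s$ is the unique element of $\mathcal{N}^*_c$ whose $(h\cdot m)$-expectation has right derivative $-\mathcal{E}(u,hv)$ at $t=0$ for every $h\in D(\mathcal{E})_b$. I therefore aim to verify that the ordinary Stieltjes integral on the right-hand side of (\ref{a1}) lies in $\mathcal{N}^*_c$ and realizes the same short-time limit, reducing to the finite-Revuz case by localization.

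First I would settle the case $A^{(1)},A^{(2)}\in A_c^{+,f}$. Writing $v=v^+-v^-$, each of the four integrals $\int_0^\cdot v^{\pm}(X_s)\,dA^{(i)}_s$ is a PCAF with finite Revuz measure $v^{\pm}\,d\mu_{A^{(i)}}$, so the Stieltjes integral lies in $\mathcal{N}^*_c$. The Revuz correspondence (cf.\ \cite[Theorem VI.2.4]{MR92}) gives
\begin{equation*}
\lim_{t\downarrow 0}\frac{1}{t}E_{h\cdot m}\!\left[\int_0^t v(X_s)\,d(A^{(1)}_s-A^{(2)}_s)\right]=\int_E hv\,d(\mu_{A^{(1)}}-\mu_{A^{(2)}}).
\end{equation*}
On the other hand, the hypothesis $N^{[u]}=A^{(1)}-A^{(2)}$ on $[0,\zeta)$ together with the scalar identity $\lim_{t\downarrow 0}t^{-1}E_{h\cdot m}[N^{[u]}_t]=-\mathcal{E}(u,h)$ for $h\in D(\mathcal{E})_b$ (the $v\equiv 1$ instance of the short-time asymptotics behind (\ref{w2}), traceable to \cite[Theorem 3.4]{f} and \cite[Theorem 5.8(iii)]{MMS}) forces $\int_E h\,d(\mu_{A^{(1)}}-\mu_{A^{(2)}})=-\mathcal{E}(u,h)$. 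Substituting $hv\in D(\mathcal{E})_b$ in place of $h$ yields $\int_E hv\,d(\mu_{A^{(1)}}-\mu_{A^{(2)}})=-\mathcal{E}(u,hv)$, and Lemma \ref{lem2} concludes (\ref{a1}) in this case.

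For general $A^{(i)}\in A_c^+$, the Revuz measures need not be finite, so I would localize. Using smoothness of $\mu_{A^{(i)}}$, pick an $\mathcal{E}$-nest $\{F_n\}$ along which $\mathbf{1}_{F_n}\mu_{A^{(i)}}$ is finite, let $\tau_n:=\inf\{t:X_t\notin F_n\}$ so that $\tau_n\uparrow\zeta$ q.e., and set $A^{(i,n)}_t:=\int_0^t\mathbf{1}_{F_n}(X_s)\,dA^{(i)}_s\in A_c^{+,f}$, which coincides with $A^{(i)}_t$ on $\{t<\tau_n\}$. The main obstacle I anticipate is that the finite-Revuz argument just given uses the \emph{global} matching $N^{[u]}=A^{(1)}-A^{(2)}$, whereas $A^{(1,n)}-A^{(2,n)}$ agrees with $N^{[u]}$ only up to $\tau_n$, so Lemma \ref{lem2} cannot be invoked globally after truncation; I plan to bypass this by passing to the part Dirichlet form of $X$ on $F_n$, where the truncated PCAFs have finite Revuz measures and the local Fukushima decomposition of $u$ relative to the part form still matches $A^{(1,n)}-A^{(2,n)}$ on $[0,\tau_n)$. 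Applying the finite case on the part form and then sending $n\to\infty$ (using $\tau_n\uparrow\zeta$) would deliver (\ref{a1}) for all $t<\zeta$.
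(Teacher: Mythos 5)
Your strategy (show the Stieltjes integral lies in ${\cal N}^*_c$, match the short-time Revuz limit, and invoke Lemma \ref{lem2}) is genuinely different from the paper's. The paper instead unwinds Definition \ref{00} and reduces (\ref{a1}) to showing that $N^{[\lambda(u,v)^{\triangle}]}$ coincides with an explicit CAF of bounded variation; that identification is obtained from the integration-by-parts identity (\ref{11}) together with the non-symmetric analogue of the characterization of CAFs $N^{[w]}$ of bounded variation (\cite[Theorem 5.4.2]{Fu94}, \cite[Theorem 5.2.7]{oshima}). The point of that route is that the characterization theorem is already formulated in terms of a nest $\{K_l\}$ and test functions supported on $K_l$, so it absorbs the localization automatically and delivers the identity for all $t<\zeta$ in one stroke.

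Your finite-Revuz case is essentially sound, but there is a genuine gap in the localization step, and your proposed bypass does not close it. Applying the finite case to the part form on $F_n$ would (at best) identify the Stieltjes integral $\int_0^t v\,d(A^{(1,n)}_s-A^{(2,n)}_s)$ with the \emph{part-form} stochastic integral of $v$ against the zero-energy part of $u$ relative to $({\cal E}^{F_n},D({\cal E}^{F_n}))$. What you must identify, however, is the \emph{global} stochastic integral of Definition \ref{00}, which is built from the global auxiliary objects $\lambda(u,v)^{\triangle}$ and $G(u,v)$, defined by variational equations posed over all of $D({\cal E})$. There is no a priori reason why this global object, restricted to $[0,\tau_n)$, coincides with the part-form stochastic integral; establishing such a locality/consistency property of $\int_0^t v(X_s)\,dN^{[u]}_s$ is essentially as hard as the proposition itself, is nowhere available in the paper, and does not follow from Lemma \ref{lem2}, whose uniqueness statement is global and requires membership in ${\cal N}^*_c$ with respect to the original form --- which the truncated objects only enjoy relative to the part space. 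There are also secondary points you would need to address (whether $u$ restricted to $F_n$ lies in the part-form domain, and whether Fukushima's decomposition commutes with taking parts in the non-symmetric setting), but the missing locality of the stochastic integral is the essential obstruction. The clean repair is the paper's: verify the nest condition ${\cal E}(\lambda(u,v)^{\triangle},h)=-\langle I_{K_l}\cdot\mu_B,h\rangle$ via (\ref{11}) and appeal to the bounded-variation characterization of $N^{[w]}$, which is local by construction.
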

\begin{proof}
By Definition \ref{00}, in order to prove (\ref{a1}) it suffices to prove that for $t<\zeta$,
\begin{eqnarray}\label{a2}
N^{[\lambda(u,v)^{\triangle}]}_t&=&\int_0^t\lambda(u,v)^{\triangle}(X_s)ds+\frac{1}{2}G(u,v)_t\nonumber\\
& &+\int_0^t(u^*-u)v(X_s)ds+\int_0^tv(X_s)d(A^{(1)}_s-A^{(2)}_s).
\end{eqnarray}
By quasi-homeomorphism, we assume without loss of generality that $({\cal E},D({\cal
E}))$ is a regular Dirichlet form. Denote by $B_t$ the right hand side of (\ref{a2}). Then $(B_t)_{t\ge 0}\in A_c^+-A_c^+$. By (\ref{11}), it is not hard to show that there exists a common compact nest $\{K_l\}$ such that
$$
{\cal E}(\lambda(u,v)^{\triangle},h)=-\langle I_{K_l}\cdot\mu_B,h\rangle,\ \ \forall h\in D({\cal E})_{K_l}\cap {\cal B}_b(E),\ l=1,2,\dots
$$
Therefore, (\ref{a2}) holds by the analog of \cite[Theorem 5.4.2]{Fu94} in the non-symmetric Dirichlet forms setting (cf. \cite[Theorem 5.2.7]{oshima}). The proof is complete.
\end{proof}

\begin{thm}\label{pb} Let $v\in D({\cal E})_b$ and $\{u_n\}_{n=0}^{\infty}\subset D({\cal E})$ satisfying $u_n$ converges to $u_0$ w.r.t. the $\tilde{\cal E}^{1/2}_1$-norm as $n\rightarrow\infty$. Then there exists a subsequence $\{n'\}$ such that for ${\cal E}{\textrm{-}q.e.}$ $x\in E$,
$$
P_x\left(\lim_{n'\rightarrow\infty}\int_0^tv(X_s)dN^{[u_{n'}]}_s=\int_0^tv(X_s)dN^{[u_0]}_s\ {\rm uniformly\ on\ any\ finite\ interval\ of}\ t\right)=1.                                                                                                          $$
\end{thm}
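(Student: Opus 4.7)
The plan is to invoke Definition \ref{00} to split each $\int_0^t v(X_s)\,dN^{[u_n]}_s$ as
\begin{equation*}
N^{[\lambda(u_n,v)^{\triangle}]}_t-\int_0^t\lambda(u_n,v)^{\triangle}(X_s)\,ds-\frac{1}{2}G(u_n,v)_t-\int_0^t(u_n^{*}-u_n)v(X_s)\,ds,
\end{equation*}
and then to show that each of the four pieces converges, along a single common subsequence, $P_x$-a.s.\ uniformly on compact $t$-intervals for ${\cal E}$-q.e.\ $x$. A diagonal subsequence argument at the end combines the resulting q.e.\ convergences into the single subsequence $\{n'\}$ demanded in the statement.

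First I would verify that the auxiliary maps $u\mapsto u^{*}$, $u\mapsto u^{\triangle}$ and $u\mapsto\lambda(u,v)$ are bounded linear operators on the Hilbert space $(D({\cal E}),\tilde{\cal E}_1^{1/2})$. Using (\ref{1}) and the weak sector condition, $\tilde{\cal E}_1(u^{*},u^{*})={\cal E}_1(u,u^{*})\le K\tilde{\cal E}_1^{1/2}(u,u)\tilde{\cal E}_1^{1/2}(u^{*},u^{*})$ gives the bound for the $*$-operator; the analogous computation with (\ref{2}) handles $\triangle$. For $\lambda$, Cauchy-Schwarz applied to the energy-measure integral in (\ref{14}) yields $\tilde{\cal E}_1^{1/2}(\lambda(u,v),\lambda(u,v))\le\|v\|_\infty\tilde{\cal E}_1^{1/2}(u^{*},u^{*})$. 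Composing these bounds, $u_n^{*}\to u_0^{*}$ and $u_n^{*}-u_n\to u_0^{*}-u_0$ in $\tilde{\cal E}_1^{1/2}$-norm (hence in $L^2(m)$), and $f_n:=\lambda(u_n,v)^{\triangle}\to f_0:=\lambda(u_0,v)^{\triangle}$ in $\tilde{\cal E}_1^{1/2}$-norm.

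The $N^{[\cdot]}$ piece is handled by the Fukushima decomposition $N^{[f_n-f_0]}=A^{[f_n-f_0]}-M^{[f_n-f_0]}$. Extracting a subsequence along which $\tilde f_{n'}\to\tilde f_0$ ${\cal E}$-quasi-uniformly gives uniform q.e.\ convergence of $A^{[f_{n'}-f_0]}_t=(\tilde f_{n'}-\tilde f_0)(X_t)-(\tilde f_{n'}-\tilde f_0)(X_0)$. For the MAF part, the energy identity combined with Doob's maximal inequality yields a bound of the form
\begin{equation*}
E_{h\cdot m}\Bigl[\sup_{s\le T}\bigl|M^{[f_{n'}-f_0]}_s\bigr|^2\Bigr]\le c_{T,h}\,\tilde{\cal E}_1(f_{n'}-f_0,f_{n'}-f_0)
\end{equation*}
for each $h\in D({\cal E})_b$, and a further Borel-Cantelli extraction upgrades this to uniform q.e.\ convergence. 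For the $G$-term, the signed smooth measure associated with $G(u_n,v)-G(u_0,v)$ is $\tilde\mu_{\langle v,(u_n-u_0)^{*}\rangle}$, whose total variation is dominated by a multiple of $\tilde{\cal E}_1^{1/2}(v,v)\tilde{\cal E}_1^{1/2}((u_n-u_0)^{*},(u_n-u_0)^{*})$ and therefore tends to zero. The Revuz-correspondence maximal inequality $E_{h\cdot m}[\sup_{s\le T}|A_s|]\le c_{T,h}|\mu_A|(E)$ (valid in the non-symmetric setting via quasi-homeomorphism), together with Borel-Cantelli, gives the required q.e.\ uniform convergence of $G(u_{n'},v)$. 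The same inequality, applied with integrands $f_n$ and $(u_n^{*}-u_n)v$ converging to $f_0$ and $(u_0^{*}-u_0)v$ in $L^1(m)$, disposes of the two remaining Lebesgue integrals.

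The most delicate point, I expect, will be the MAF convergence step: one has to invoke the correct form of the energy identity for $M^{[f]}$ in the non-symmetric setting and verify that the constant $c_{T,h}$ in the Doob-type maximal inequality can be chosen independently of $n$, so that the right-hand side above genuinely tends to zero along the subsequence. Once this maximal inequality is secured, the remaining pieces reduce to standard PCAF convergence facts via the Revuz correspondence, and the diagonal subsequence extraction completes the proof.
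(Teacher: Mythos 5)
Your overall strategy coincides with the paper's: the same four-term decomposition from Definition \ref{00}, the same continuity bounds for the maps $u\mapsto u^{*}$, $u\mapsto u^{\triangle}$ and $u\mapsto\lambda(u,v)$, the same total-variation estimate $|\tilde{\mu}_{<v,(u_n-u_0)^{*}>}|(E)\le 2\,{\cal E}_1^{1/2}(v,v)\,{\cal E}_1^{1/2}((u_n-u_0)^{*},(u_n-u_0)^{*})$ for the $G$-term, and your hands-on treatment of $N^{[\lambda(u_n,v)^{\triangle}]}$ (Fukushima decomposition plus a Doob-type maximal inequality for the martingale part) is exactly the content of the non-symmetric analogue of \cite[Corollary 5.2.1(ii)]{Fu94} that the paper cites instead of reproving.

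The one step that does not go through as written is your disposal of the two Lebesgue-integral terms. You apply the maximal inequality $E_{h\cdot m}[\sup_{s\le T}|A_s|]\le c_{T,h}\,|\mu_A|(E)$ to the CAFs $\int_0^{t}f_n(X_s)\,ds$ and $\int_0^{t}(u_n^{*}-u_n)v(X_s)\,ds$, whose Revuz measures are $f_n\cdot m$ and $(u_n^{*}-u_n)v\cdot m$; this requires convergence of the integrands in $L^1(E;m)$. But $\tilde{\cal E}_1^{1/2}$-convergence only yields convergence in $L^2(E;m)$, and since $m$ is merely $\sigma$-finite, this does not imply $L^1$-convergence; worse, for a general element of $D({\cal E})$ the measure $u\cdot m$ need not have finite total variation at all, so $|\mu_A|(E)$ may be infinite and the inequality is vacuous. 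The paper circumvents this by pairing with a potential: for a measure $\nu$ of finite energy integral,
$$
E_{\nu}\left[\sup_{0\le s\le t}\left|\int_0^{s}u(X_r)\,dr\right|\right]\le\int_0^{t}\langle\nu,T_s|u|\rangle\,ds=\int_0^{t}{\cal E}_1(U_1\nu,T_s|u|)\,ds\le Kce^{Mt}t\,{\cal E}_1^{1/2}(U_1\nu,U_1\nu)\,{\cal E}_1^{1/2}(u,u),
$$
where one uses that $(T_t)_{t\ge 0}$ restricts to a strongly continuous semigroup on $D({\cal E})$, so that ${\cal E}_1^{1/2}(T_s|u|,T_s|u|)\le ce^{Ms}{\cal E}_1^{1/2}(u,u)$. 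This bounds the supremum directly by the ${\cal E}_1^{1/2}$-norm of the integrand, which is the quantity that actually tends to zero, and then the Borel--Cantelli extraction you describe (the method of \cite[Lemma 5.1.2]{Fu94}) goes through. With that substitution your argument matches the paper's proof.
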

\begin{proof} By Definition \ref{00}, we have
\begin{eqnarray*}
\int_0^tv(X_s)dN^{[u_n]}_s&=&N^{[\lambda(u_n,v)^{\triangle}]}_t-\int_0^t\lambda(u_n,v)^{\triangle}(X_s)ds\\
& &-\frac{1}{2}G(u_n,v)_t-\int_0^t(u_n^*-u_n)v(X_s)ds,\ \ n=0,1,2,\dots
\end{eqnarray*}
For each term of the right hand side of the above equation, we will prove that  there exists a subsequence which converges uniformly on\ any\ finite\ interval\ of $t$.

(i) Term $N^{[\lambda(u_n,v)^{\triangle}]}_t$: Suppose $u\in D({\cal E})$ and $v\in D({\cal E})_b$. By (\ref{14}) and \cite[(5.6.2), (5.2.8), (5.2.3)]{Fu94}, we get
\begin{eqnarray*}
{\cal E}^{1/2}_1(\lambda(u,v),\lambda(u,v))&=&\sup_{h\in D({\cal E})_b,{\cal E}_1(h,h)=1}\frac{1}{2}\left|\int_E vd\tilde{\mu}_{<h,u^*>}\right|\nonumber\\
&\le&\|v\|_{\infty}{\cal E}^{1/2}_1(u^*,u^*).
\end{eqnarray*}
Note that (\ref{1}) and (\ref{2}) imply the continuity of the maps $f\rightarrow f^*$ and $f\rightarrow f^{\triangle}$. Then $\lambda(u_n,v)^{\triangle}$ converges to $\lambda(u_0,v)^{\triangle}$ w.r.t. the $\tilde{\cal E}^{1/2}_1$-norm as $n\rightarrow\infty$. Therefore the assertion on the term $N^{[\lambda(u_n,v)^{\triangle}]}_t$ is proved by the analog of \cite[Corollary 5.2.1(ii)]{Fu94} in the non-symmetric Dirichlet forms setting.

(ii) Term $\int_0^t\lambda(u_n,v)^{\triangle}(X_s)ds$: Denote by $S_0$ the family of all measures of finite energy integral. Suppose that $u\in D({\cal E})$ and $\nu\in S_0$. Denote by $(T_t)_{t\ge 0}$ the $L^2$-semigroup associated with $({\cal E},D({\cal E}))$. By \cite[Remark 3.3(ii)]{f}, the restriction of $(T_t)_{t\ge 0}$ on $D({\cal E})$ is a strongly continuous semigroup on $D({\cal E})$. Then there exist $c,M>0$ such that
$$
{\cal E}^{1/2}_1(T_tu,T_tu)\le ce^{Mt}{\cal E}^{1/2}_1(u,u),\ \ \forall u\in D({\cal E}), t\ge 0.
$$
Thus
\begin{eqnarray}\label{z2}
E_{\nu}\left[\sup_{0\le s\le t}\left|\int_0^su(X_r)dr\right|\right]&\le&\int_0^t\langle\nu,T_s|u|\rangle ds\nonumber\\
&=&\int_0^t{\cal E}_1(U_1\nu,T_s|u|) ds\nonumber\\
&\le&K{\cal E}^{1/2}_1(U_1\nu,U_1\nu)\int_0^t{\cal E}^{1/2}_1(T_s|u|,T_s|u|) ds\nonumber\\
&\le&Kce^{Mt}t{\cal E}^{1/2}_1(U_1\nu,U_1\nu){\cal E}^{1/2}_1(|u|,|u|)\nonumber\\
&\le&Kce^{Mt}t{\cal E}^{1/2}_1(U_1\nu,U_1\nu){\cal E}^{1/2}_1(u,u),
\end{eqnarray}
where $K>0$ is the continuity constant (cf. \cite[I.(2.3)]{MR92}).

By (i), $\lambda(u_n,v)^{\triangle}$ converges to $\lambda(u_0,v)^{\triangle}$ w.r.t. the $\tilde{\cal E}^{1/2}_1$-norm as $n\rightarrow\infty$. Then, by (\ref{z2}) and the same method as in the proof of \cite[Lemma 5.1.2]{Fu94} (cf. \cite[Theorem 2.3.8]{oshima}), the assertion on the term $\int_0^t\lambda(u_n,v)^{\triangle}(X_s)ds$ is proved.

(iii) Recall that, for $u\in D({\cal E})$ and $v\in D({\cal E})_b$, $G(u,v)$ denotes the unique element in  $A_c^+-A_c^+$ that is corresponding to $\tilde{\mu}_{<v,u^*>}$ under the Revuz correspondence between smooth measures of $({\cal E},D({\cal E}))$ and PCAFs of $X$. We use  $G^+(u,v)$ and $G^-(u,v)$ to denote the PACFs corresponding to $\tilde{\mu}^+_{<v,u^*>}$ and
$\tilde{\mu}^-_{<v,u^*>}$, respectively.

Define
$$
\hat{S}_{00}:=\{\mu\in S_0\,|\, \mu(E)<\infty,\|\hat{U}_1\mu\|_{\infty}<\infty\},
$$
where $\hat{U}_1$ denotes the 1-copotential. Let $\nu\in S_{00}$. Then, by \cite[Lemma 5.1.1]{oshima} and \cite[(5.6.2), (5.2.8), (5.2.3)]{Fu94}, we get
\begin{eqnarray*}
& &E_{\nu}\left[\sup_{0\le s\le t}|G(u_n,v)_s-G(u,v)_s|\right]\\
&&\ \ \ \ \ \ \ \ =E_{\nu}\left[\sup_{0\le s\le t}|G(u_n-u,v)_s|\right]\\
&&\ \ \ \ \ \ \ \ \le E_{\nu}[|G^+(u_n-u,v)_t|]+E_{\nu}[|G^-(u_n-u,v)_t|]\\
&&\ \ \ \ \ \ \ \ \le(1+t)\|\hat{U}^1\nu\|_{\infty}|\tilde{\mu}_{<v,(u_n-u)^*>}|(E)\\
&&\ \ \ \ \ \ \ \ \le2(1+t)\|\hat{U}^1\nu\|_{\infty}|{\cal E}^{1/2}_1(v,v){\cal E}^{1/2}_1((u_n-u)^*,(u_n-u)^*),
\end{eqnarray*}
which converges to 0 as $n\rightarrow\infty$. The proof is completed by the same method as in the proof of \cite[Lemma 5.1.2]{Fu94} (cf. \cite[Theorem 2.3.8]{oshima}).

(iv) Term $\int_0^t(u_n^*-u_n)v(X_s)ds$: The proof is similar to that of (ii) by noting that, for $u\in D({\cal E})$ and $v\in D({\cal E})_b$,
$$
E_{\nu}\left[\sup_{0\le s\le t}\left|\int_0^suv(X_r)dr\right|\right]\le \|v\|_{\infty}\int_0^tE_{\nu}[|u|(X_s)]ds=\|v\|_{\infty}\int_0^t\langle\nu,T_s|u|\rangle ds.
$$
\end{proof}

\section {It\^{o}'s formula}\setcounter{equation}{0}
\label{sec:transform} In this section, we will establish It\^{o}'s formula in terms of the stochastic integrals introduced in \S2. We refer the reader to \cite{F1,F2,CFKZ,kuwae2010,K2} for other developments in this connection. By quasi-homeomorphism, we assume without loss of generality in this section that $({\cal E},D({\cal
E}))$ is a regular (non-symmetric) Dirichlet form and its associated Markov process $X$ is a Hunt process.

\begin{prop}\label{pa} Let $u,v\in D({\cal E})_b$. Then
\begin{equation}\label{s1}
\int_0^tv(X_s)dN^{[u]}_s+\int_0^tu(X_s)dN^{[v]}_s=N^{[uv]}_t-\langle M^{[u]},M^{[v]}\rangle_t,\ \ t\ge 0.
\end{equation}
\end{prop}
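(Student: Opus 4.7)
The strategy is to apply Lemma~\ref{lem2}: I show both sides of (\ref{s1}) lie in $\mathcal{N}^*_c$ and produce the same initial $h\cdot m$-mean rate $\lim_{t\downarrow 0}\frac{1}{t}E_{h\cdot m}[\,\cdot\,]$ for every $h\in D(\mathcal{E})_b$; uniqueness then forces equality. The LHS is a sum of two $\mathcal{N}^*_c$ elements by Remark~\ref{rem1}. On the RHS, $uv\in D(\mathcal{E})_b$ because $D(\mathcal{E})_b$ is an algebra, so $N^{[uv]}$ is a bona fide zero-energy CAF. The sharp bracket $\langle M^{[u]},M^{[v]}\rangle$ is a CAF of bounded variation whose signed Revuz measure has total variation bounded by $2\tilde{\mathcal{E}}^{1/2}_1(u,u)\tilde{\mathcal{E}}^{1/2}_1(v,v)<\infty$ (Kunita--Watanabe, exactly as in the Cauchy--Schwarz step of part (iii) in the proof of Theorem~\ref{pb}); its Jordan components therefore lie in $A_c^{+,f}$, and the RHS belongs to $\mathcal{N}^*_c$.

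Next I compute the rates. By (\ref{w2}) the LHS rate is $-\mathcal{E}(u,hv)-\mathcal{E}(v,hu)$. For the RHS, applying the Fukushima decomposition (\ref{31}) to $uv$ and using the martingale property of $M^{[uv]}$ together with the semigroup identity underlying (\ref{w2}) (cf.\ \cite[Theorem 5.8(iii)]{MMS}) yields $\lim_{t\downarrow 0}\frac{1}{t}E_{h\cdot m}[N^{[uv]}_t]=-\mathcal{E}(uv,h)$, while the Revuz correspondence together with the non-symmetric identification of the Revuz measure of $\langle M^{[u]},M^{[v]}\rangle$ with $\tilde{\mu}_{\langle u,v\rangle}$ (recorded in \cite{f}) gives $\lim_{t\downarrow 0}\frac{1}{t}E_{h\cdot m}[\langle M^{[u]},M^{[v]}\rangle_t]=\int_E h\,d\tilde{\mu}_{\langle u,v\rangle}$. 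Matching the two rates reduces (\ref{s1}) to the Leibniz-type identity
\[
\mathcal{E}(uv,h)=\mathcal{E}(u,hv)+\mathcal{E}(v,hu)-\int_E h\,d\tilde{\mu}_{\langle u,v\rangle},\qquad h\in D(\mathcal{E})_b.
\]

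This identity is the main obstacle; it is the non-symmetric analogue of Fukushima's derivation rule. I would derive it by using (\ref{12}) to pass from $\mathcal{E}$ to $\tilde{\mathcal{E}}$ in each of the three form evaluations, invoking the symmetric derivation rule (\ref{13}) (equivalently Theorem~3.2.2 of \cite{Fu94}) with appropriate choice of test function, and collecting the correction terms via the defining relations (\ref{1})--(\ref{2}) for $f^*$ and $f^\triangle$. Alternatively, the identity is implicit in the proof of (\ref{a2}) and explicit in the non-symmetric Dirichlet forms exposition of \cite{oshima}. With it in hand, Lemma~\ref{lem2} closes the argument.
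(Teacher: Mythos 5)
Your overall strategy is exactly the paper's: show both sides lie in ${\cal N}^*_c$, compute $\lim_{t\downarrow 0}\frac{1}{t}E_{h\cdot m}[\,\cdot\,]$ for each, and invoke Lemma \ref{lem2}. Your membership argument and your rate $-{\cal E}(u,hv)-{\cal E}(v,hu)$ for the left-hand side via (\ref{w2}) are fine. The gap is in the rate you assign to the bracket term. You identify the Revuz measure of $\langle M^{[u]},M^{[v]}\rangle$ with $\tilde{\mu}_{\langle u,v\rangle}$, the mutual energy measure taken with respect to the \emph{symmetric part} $(\tilde{\cal E},D({\cal E}))$; this identification is not justified (and is not in \cite{f}), and it fails in general. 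The correct characterization, which is \cite[Theorem 5.1.5]{oshima} and is precisely what the paper cites, is
$$
\int_E h\,d\mu_{\langle M^{[u]},M^{[v]}\rangle}={\cal E}(u,hv)+{\cal E}(v,hu)-{\cal E}(uv,h),
$$
with the original non-symmetric form and with $u,v$ in the first argument (heuristically: compute $\lim_{t\downarrow 0}\frac{1}{t}E_{h\cdot m}[(u(X_t)-u(X_0))(v(X_t)-v(X_0))]$ with the semigroup). The discrepancy between this and $\int_E h\,d\tilde{\mu}_{\langle u,v\rangle}$ is $\check{\cal E}(u,hv)+\check{\cal E}(v,hu)-\check{\cal E}(uv,h)$, where $\check{\cal E}$ denotes the antisymmetric part of ${\cal E}$; this vanishes for drift-type antisymmetry but not for non-symmetric jumping or killing parts. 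One already sees this from (\ref{sd10}): the jump part of the bracket charges $x$ through $N(x,dy)\nu(dx)=2J(dy,dx)$, whereas the energy measure of $\tilde{\cal E}$ uses the symmetrization $J(dx,dy)+J(dy,dx)$; the two measures have the same total mass but need not agree against a non-constant $h$.

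As a consequence, the ``Leibniz-type identity'' you isolate as the main obstacle is, in the form you state it (with $\tilde{\mu}_{\langle u,v\rangle}$), false in general, and the proposed derivation from (\ref{12}), (\ref{13}) and (\ref{1})--(\ref{2}) cannot close, since those relations control only the symmetric part and its correction terms, not the antisymmetric defect above. Once $\tilde{\mu}_{\langle u,v\rangle}$ is replaced by $\mu_{\langle M^{[u]},M^{[v]}\rangle}$, the identity is not an obstacle at all: it is exactly the displayed characterization from \cite[Theorem 5.1.5]{oshima}, the two rates match term by term, and Lemma \ref{lem2} finishes the proof as you intend.
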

\begin{proof} Let $u,v\in D({\cal E})_b$. Then, for any $h\in D({\cal E})_b$, we obtain by (\ref{w2}) and \cite[Theorem 5.1.5]{oshima} that
\begin{eqnarray*}
& &\lim_{t\downarrow 0}\frac{1}{t}E_{h\cdot m}\left[\int_0^tv(X_s)dN^{[u]}_s+\int_0^tu(X_s)dN^{[v]}_s\right]\\
&&\ \ \ \ \ \ \ \ =-{\cal E}(u,hv)-{\cal E}(v,hu)\\
&&\ \ \ \ \ \ \ \ =-{\cal E}(uv,h)-\int_Ehd\mu_{\langle M^{[u]},M^{[v]}\rangle}\\
&&\ \ \ \ \ \ \ \ =\lim_{t\downarrow 0}\frac{1}{t}E_{h\cdot m}\left[N^{[uv]}_t-\langle M^{[u]},M^{[v]}\rangle_t\right].
\end{eqnarray*}
Since $\int_0^tv(X_s)dN^{[u]}_s+\int_0^tu(X_s)dN^{[v]}_s\in {\cal N}^*_c$ and $N^{[uv]}_t-\langle M^{[u]},M^{[v]}\rangle_t\in {\cal N}^*_c$, (\ref{s1}) holds by Lemma \ref{lem2}.
\end{proof}

By Theorem \ref{pb}, Proposition \ref{pa} and \cite[Corollary I.4.15]{MR92}, we can prove the following corollary.

\begin{cor}\label{cor1} Let $u\in D({\cal E})_b$ and $\{v_n\}_{n=0}^{\infty}\subset D({\cal E})_b$ satisfying $v_n$ converges to $v_0$ w.r.t. the $\|\cdot\|_{\infty}$-norm and the $\tilde{\cal E}^{1/2}_1$-norm as $n\rightarrow\infty$. Then there exists a subsequence $\{n'\}$ such that for ${\cal E}{\textrm{-}q.e.}$ $x\in E$,
$$
P_x\left(\lim_{n'\rightarrow\infty}\int_0^tv_{n'}(X_s)dN^{[u]}_s=\int_0^tv_0(X_s)dN^{[u]}_s\ {\rm uniformly\ on\ any\ finite\ interval\ of}\ t\right)=1.                                                                                                          $$
\end{cor}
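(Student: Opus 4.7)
The plan is to reduce the claim to Theorem~\ref{pb} by means of the product formula of Proposition~\ref{pa}. Applying that proposition to $u$ and $v_n$ gives, for every $n\ge 0$,
\begin{equation*}
\int_0^t v_n(X_s)\,dN^{[u]}_s \;=\; N^{[uv_n]}_t \;-\; \langle M^{[u]},M^{[v_n]}\rangle_t \;-\; \int_0^t u(X_s)\,dN^{[v_n]}_s.
\end{equation*}
It therefore suffices to extract a single subsequence $\{n'\}$ along which each of the three right-hand terms converges to its $n=0$ analog, uniformly on finite intervals of $t$, $P_x$-a.s.\ for ${\cal E}$-q.e. $x$.

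For the zero-energy term $N^{[uv_n]}$, I would first check that $uv_n\to uv_0$ in the $\tilde{\cal E}_1^{1/2}$-norm. This is immediate from the standard Leibniz-type estimate
\begin{equation*}
\tilde{\cal E}(u(v_n-v_0),u(v_n-v_0))^{1/2}\le\|u\|_\infty\tilde{\cal E}(v_n-v_0,v_n-v_0)^{1/2}+\|v_n-v_0\|_\infty\tilde{\cal E}(u,u)^{1/2}
\end{equation*}
together with $\|u(v_n-v_0)\|_{L^2}\le\|u\|_\infty\|v_n-v_0\|_{L^2}$; this is exactly where the sup-norm convergence hypothesis on $v_n$ is needed. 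The analog of Corollary 5.2.1(ii) of \cite{Fu94} in the non-symmetric setting---invoked already in step (i) of the proof of Theorem~\ref{pb}---then yields a subsequence along which $N^{[uv_{n'}]}\to N^{[uv_0]}$ uniformly on finite intervals, $P_x$-a.s.\ for q.e. $x$. For the third term, Theorem~\ref{pb} itself, with the fixed bounded $u\in D({\cal E})_b$ playing the role of the multiplier and $\{v_n\}\subset D({\cal E})$ playing the role of the $\tilde{\cal E}_1^{1/2}$-convergent sequence, directly supplies a further subsequence with the required convergence.

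For the bracket term, bilinearity gives $\langle M^{[u]},M^{[v_n]}\rangle_t-\langle M^{[u]},M^{[v_0]}\rangle_t=\langle M^{[u]},M^{[v_n-v_0]}\rangle_t$, and Kunita--Watanabe yields
\begin{equation*}
\sup_{0\le s\le t}\big|\langle M^{[u]},M^{[v_n-v_0]}\rangle_s\big|\;\le\;\langle M^{[u]}\rangle_t^{1/2}\,\langle M^{[v_n-v_0]}\rangle_t^{1/2}.
\end{equation*}
The energy bound $E_m[\langle M^{[v_n-v_0]}\rangle_t]\le 2t\,\tilde{\cal E}(v_n-v_0,v_n-v_0)\to 0$, combined with Cauchy--Schwarz under $P_m$, gives $L^1(P_m)$-convergence of the supremum to zero; \cite[Corollary I.4.15]{MR92} then extracts a further subsequence along which uniform $P_x$-a.s.\ convergence holds for q.e. $x$.

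Taking a common refinement of the three subsequences completes the proof. I expect the main obstacle to be the bracket term: one must upgrade an $L^1(P_m)$-type estimate to $P_x$-a.s.\ uniform convergence for q.e. $x$, which is precisely the purpose of \cite[Corollary I.4.15]{MR92} once an energy-decay rate for the approximating sequence is available (if necessary, one preselects a subsequence so that $\tilde{\cal E}(v_n-v_0,v_n-v_0)$ decays geometrically, so that a Borel--Cantelli argument can be run).
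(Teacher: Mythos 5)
Your proposal is correct and follows exactly the route the paper intends: the paper's one-line proof cites Proposition~\ref{pa}, Theorem~\ref{pb} and \cite[Corollary I.4.15]{MR92}, and you use Proposition~\ref{pa} to decompose the integral, the Leibniz-type product estimate (which is precisely the content of \cite[Corollary I.4.15]{MR92}, so that reference belongs with your $uv_n\to uv_0$ step rather than with the bracket term) to handle $N^{[uv_n]}$, Theorem~\ref{pb} for $\int_0^t u(X_s)\,dN^{[v_n]}_s$, and the standard energy/Borel--Cantelli argument for $\langle M^{[u]},M^{[v_n]}\rangle$. The only quibble is that citation placement; the mathematics is sound and matches the paper's argument.
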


Let $(N,H)$ be a L\'evy system for $X$ (cf. \cite[Definition A.3.7]{Fu94}) and $\nu$ be the Revuz measure of the PCAF $H$. Similar to \cite[Theorem 5.3.1]{Fu94} (cf. also \cite[Chapter 5]{oshima}), we can show that the jumping measure $J$ and the killing measure $K$ of $X$ are given by
\begin{eqnarray}\label{sd10}
J(dx,dy)=\frac{1}{2}N(y,dx)\nu(dy),\ \ K(dx) = N(x,\Delta)\nu(dx).
\end{eqnarray}
Let $u\in D({\cal E})$. Then $M^{[u]}$ can be decomposed as
\begin{eqnarray}\label{sd0}
M^{[u]}_t=M^{[u],c}_t+M^{[u],j}_t+M^{[u],k}_t,\ \ t\ge 0,\ P_x{\textrm{-}a.s.}\ {\rm for}\ {\cal E}{\textrm{-}q.e.}\ x\in E,
\end{eqnarray}
where $M^{[u],c}_t$ is the continuous part of $M^{[u]}_t$ and
\begin{eqnarray*}
M^{[u],j}_t&=&\lim_{l\rightarrow\infty}\left\{\sum_{0<s\le t}(u(X_s)-u(X_{s-}))1_{\{|u(X_s)-u(X_{s-})|>1/l\}}1_{\{s<\zeta\}}\right.\\
& &\left.-\int_0^t\left(\int_{\{y\in E\,|\,|u(y)-u(X_s)|>1/l\}}(u(y)-u(X_s))N(X_s,dy)\right)dH_s\right\},\\
M^{[u],k}_t&=&\int_0^tu(X_s)N(X_s,\Delta)dH_s-u(X_{\zeta-})1_{\{t\ge \zeta\}}
\end{eqnarray*}
are the jumping and killing parts of $M^{[u]}$, respectively. The limit in the expression for $M^{[u],j}_t$ is in the sense of convergence w.r.t. the energy $e$ and of convergence in probability under $P_x$ for ${\cal E}{\textrm{-}q.e.}$ $x\in E$ for each fixed $t>0$ (cf. \cite[Theorem A.3.9 and page 341]{Fu94}). Define $\Delta u(X_t):= u(X_t)-u(X_{t-})$ for $t>0$.

\begin{thm}\label{thm1} Let $u,v\in D({\cal E})_b$. Then,
\begin{eqnarray}\label{sd1}
& &uv(X_t)-uv(X_0)=\int_0^tv(X_{s-})du(X_s)+\int_0^tu(X_{s-})dv(X_s)+\langle M^{[u],c},M^{[v],c}\rangle\nonumber\\
& &\ \ \ \ +\sum_{0<s\le t}[\Delta uv(X_s)-v(X_{s-})\Delta u(X_s)-u(X_{s-})\Delta v(X_s)].
\end{eqnarray}
\end{thm}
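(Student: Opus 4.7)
My plan is to rearrange (\ref{sd1}) using Definition \ref{00} and Proposition \ref{pa} into an equivalent identity involving only the martingale parts, verify it on a dense subclass where classical It\^{o} applies, and extend by approximation via the continuity results of Section 2.

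\textbf{Step 1 (Algebraic reduction).} By Fukushima's decomposition (\ref{31}), $uv(X_t)-uv(X_0)=M^{[uv]}_t+N^{[uv]}_t$. By Definition \ref{00} each stochastic integral on the right of (\ref{sd1}) splits as a martingale stochastic integral plus an extended $N$-integral, and Proposition \ref{pa} combines the two extended $N$-integrals into $N^{[uv]}_t-\langle M^{[u]},M^{[v]}\rangle_t$. Since $N^{[u]}$ and $N^{[v]}$ are continuous, $\Delta u(X_s)=\Delta M^{[u]}_s$ and $\Delta v(X_s)=\Delta M^{[v]}_s$, so the jump sum in (\ref{sd1}) equals $\sum_{0<s\le t}\Delta M^{[u]}_s\Delta M^{[v]}_s$, which together with $\langle M^{[u],c},M^{[v],c}\rangle_t$ is exactly $[M^{[u]},M^{[v]}]_t$. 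After these rearrangements, (\ref{sd1}) becomes the purely martingale statement
\[ M^{[uv]}_t=\int_0^t v(X_{s-})\,dM^{[u]}_s+\int_0^t u(X_{s-})\,dM^{[v]}_s+[M^{[u]},M^{[v]}]_t-\langle M^{[u]},M^{[v]}\rangle_t. \]

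\textbf{Step 2 (Dense case).} For $u,v\in D(L)_b$ with $Lu,Lv$ bounded (which may be arranged by taking $u,v$ of the resolvent form $\alpha G_\alpha f$ with $f$ bounded), $N^{[u]}_s=\int_0^s Lu(X_r)\,dr$ is absolutely continuous, so $u(X)$ and $v(X)$ are genuine semimartingales. The classical It\^{o} product formula applied to $u(X_s)v(X_s)$, combined with the fact that $N^{[u]}$ and $N^{[v]}$ contribute nothing to the quadratic covariation (being continuous and of bounded variation), gives $[u(X),v(X)]_t=[M^{[u]},M^{[v]}]_t$. Using the identification in Definition \ref{00} of the extended $N$-integral with the ordinary Lebesgue integral when $u\in D(L)$ recovers (\ref{sd1}) on this dense subclass.

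\textbf{Step 3 (Approximation).} For general $u,v\in D({\cal E})_b$, approximate by $u_n,v_n$ in the subclass of Step 2 that converge to $u,v$ in the $\tilde{\cal E}_1^{1/2}$-norm with uniform $L^\infty$-bound and ${\cal E}$-q.e.\ pointwise convergence along a subsequence. Pass to the limit term-by-term in (\ref{sd1}) for $(u_n,v_n)$: the extended $N$-integrals converge by Theorem \ref{pb} and Corollary \ref{cor1}; the martingale stochastic integrals converge by the energy isometry $u\mapsto M^{[u]}$ together with standard $L^2$-martingale arguments; and the jump sum converges by dominated convergence via the L\'evy system (\ref{sd10}). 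The main obstacle is that Corollary \ref{cor1} requires $\|\cdot\|_\infty$-convergence of the integrand, which the resolvent approximation does not in general provide; this is circumvented by doing the approximation in two separate passes --- first in $u$ with $v$ fixed in the dense subclass, then in $v$ with the resulting $u$ fixed in $D({\cal E})_b$ --- so that each pass invokes only Theorem \ref{pb} (varying integrator with fixed integrand) or Corollary \ref{cor1} (varying integrand with fixed integrator), but never both simultaneously.
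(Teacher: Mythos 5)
Your overall strategy is genuinely different from the paper's: the paper never approximates by $D(L)$, but instead splits $M^{[uv]}$ into continuous, jumping and killing parts, quotes Kim's derivation formula for the continuous part, and verifies the jump and killing identities (\ref{sd3})--(\ref{sd4}) directly by computing $\lim_{t\downarrow 0}\frac{1}{t}E_{h\cdot m}[\cdot]$ against the L\'evy system and invoking the uniqueness Lemma \ref{lem2}. Your Step 1 reduction and your Step 2 (classical It\^{o} on $D(L)_b$, where the extended $N$-integral is an ordinary Lebesgue integral) are sound. The problem is Step 3.

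The two-pass device does not remove the need for $\|\cdot\|_{\infty}$-convergence, because (\ref{sd1}) is symmetric in $u$ and $v$: in every pass the varying function appears \emph{both} as an integrator (where Theorem \ref{pb} applies) \emph{and} as an integrand against the other function's zero-energy part. Concretely, your first pass ($u_n\in D(L)_b\to u_0\in D({\cal E})_b$, $v\in D(L)_b$ fixed) survives only because $\int_0^t u_n(X_s)\,dN^{[v]}_s=\int_0^t u_nLv(X_s)\,ds$ is a genuine Lebesgue integral; but in the second pass ($v_n\in D(L)_b\to v_0$, with $u_0\in D({\cal E})_b$ now general) the term $\int_0^t v_n(X_{s-})\,dN^{[u_0]}_s$ has a varying integrand against $N^{[u_0]}$, which for general $u_0$ is \emph{not} of bounded variation. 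The only convergence result available for that configuration is Corollary \ref{cor1}, whose hypothesis is exactly the uniform convergence $\|v_n-v_0\|_{\infty}\to 0$ that the resolvent approximation $v_n=nG_nv_0$ fails to provide. So the pass you must make last is precisely the one your scheme cannot close; rewriting the pair of $N$-integrals via Proposition \ref{pa} as $N^{[u_0v_n]}_t-\langle M^{[u_0]},M^{[v_n]}\rangle_t$ does not help either, since $u_0v_n\to u_0v_0$ in the $\tilde{\cal E}^{1/2}_1$-norm again requires $\|v_n-v_0\|_{\infty}\to 0$ through the Leibniz estimate. To repair the argument you would have to prove the residual martingale identity $M^{[uv]}_t=\int_0^t v(X_{s-})dM^{[u]}_s+\int_0^t u(X_{s-})dM^{[v]}_s+[M^{[u]},M^{[v]}]_t-\langle M^{[u]},M^{[v]}\rangle_t$ directly for $u,v\in D({\cal E})_b$ --- for instance by the L\'evy-system computation the paper uses for (\ref{sd3}) and (\ref{sd4}) --- at which point the approximation scheme becomes unnecessary. (Secondary, fixable omissions: the convergence of $\langle M^{[u_n],c},M^{[v],c}\rangle_t$ and of the jump sums is asserted but not controlled; the natural bound is via $e(M^{[u_n]}-M^{[u_0]})\to 0$ and a Cauchy--Schwarz estimate on $\sum_{s\le t}|\Delta u_n||\Delta v|$.)
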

\begin{proof} By (\ref{31}), (\ref{s1}), (\ref{sd0}) and \cite[Theorem 5.4]{Kim}, we get
\begin{eqnarray*}
uv(X_t)-uv(X_0)&=&M^{[uv]}_t+N^{[uv]}_t\\
&=&M^{[uv],c}_t+M^{[uv],j}_t+M^{[uv],k}_t\\
& &+\int_0^tv(X_{s-})dN^{[u]}_s+\int_0^tu(X_{s-})dN^{[v]}_s+\langle M^{[u]},M^{[v]}\rangle_t\\
&=&\int_0^tv(X_{s-})dM^{[u],c}_s+\int_0^tu(X_{s-})dM^{[v],c}_s+M^{[uv],j}_t+M^{[uv],k}_t\\
& &+\int_0^tv(X_{s-})dN^{[u]}_s+\int_0^tu(X_{s-})dN^{[v]}_s+\langle M^{[u]},M^{[v]}\rangle_t.
\end{eqnarray*}
Then, in order to prove (\ref{sd1}), it suffices to prove that
\begin{eqnarray}\label{sd3}
& &M^{[uv],j}_t=\int_0^tv(X_{s-})dM^{[u],j}_s+\int_0^tu(X_{s-})dM^{[v],j}_s-\langle M^{[u],j},M^{[v],j}\rangle_t\nonumber\\
& &\ \ \ \ +\sum_{0<s\le t<\zeta}[\Delta uv(X_s)-v(X_{s-})\Delta u(X_s)-u(X_{s-})\Delta v(X_s)]\ \
\end{eqnarray}
and
\begin{eqnarray}\label{sd4}
M^{[uv],k}_t&=&\int_0^tv(X_{s-})dM^{[u],k}_s+\int_0^tu(X_{s-})dM^{[v],k}_s-\langle M^{[u],k},M^{[v],k}\rangle_t\nonumber\\
& &+uv(X_{\zeta-})1_{\{t\ge \zeta\}}.
\end{eqnarray}

Below we will only prove (\ref{sd3}). The proof of (\ref{sd4}) is similar so we omit it.

Let $\rho$ be the metric of $E$ and $\{K_l\}$ be an increasing sequence of compact subsets of $E$ satisfying $\lim_{l\rightarrow\infty}K_l=E$. We define
\begin{eqnarray*}
D_l&:=&\{(x,y)\in K_l\times K_l\,|\,\rho(x,y)\ge\frac{1}{l}\},\\
V^{u,l}_t&:=&\sum_{0<s\le t}\Delta u(X_s)1_{\{(X_s,X_{s-})\in D_l\}},\\
M^{u,j,l}_t&:=&V^{u,l}_t-(V^{u,l})^p_t.
\end{eqnarray*}
Hereafter $A^p$ denotes the dual predictable projection of $A$. Define
\begin{eqnarray*}
J^l_t&:=&\int_0^tv(X_{s-})dM^{u,j,l}_s+\int_0^tu(X_{s-})dM^{v,j,l}_s-\langle M^{u,j,l},M^{v,j,l}\rangle_t\\
& &\ \ \ \ +\sum_{0<s\le t<\zeta}[\Delta uv(X_s)-v(X_{s-})\Delta u(X_s)-u(X_{s-})\Delta v(X_s)]1_{\{(X_s,X_{s-})\in D_l\}}.
\end{eqnarray*}
Then
\begin{eqnarray*}
J^l_t-M^{uv,j,l}_t&=&-\int_0^tv(X_{s-})d(V^{u,l})^p_s-\int_0^tu(X_{s-})d(V^{v,l})^p_s\\
& &-\langle M^{u,j,l},M^{v,j,l}\rangle_t+(V^{uv,l})^p_t,
\end{eqnarray*}
which implies that $J^l_t-M^{uv,j,l}\in {\cal N}^*_c$.

Further, we obtain by (\ref{sd10}) that for any $h\in D({\cal E})_b$,
\begin{eqnarray*}
\lim_{t\downarrow 0}\frac{1}{t}E_{h\cdot m}[J^l_t-M^{uv,j,l}_t]&=&-2\int_{D_l}h(x)v(y)(u(x)-u(y))J(dy,dx)\\
& &-2\int_{D_l}h(x)u(y)(v(x)-v(y))J(dy,dx)\\
& &-2\int_{D_l}h(x)(u(x)-u(y))(v(x)-v(y))J(dy,dx)\\
& &+2\int_{D_l}h(x)(uv(x)-uv(y))J(dy,dx)\\
&=&0.
\end{eqnarray*}
Thus $J^l_t=M^{uv,j,l}_t$ by Remark \ref{rem1}. Denote by $J_t$ the right hand side of (\ref{sd3}). Since $\{K_l\}$ converges to $E$, there exists a subsequence $\{l'\}$ satisfying for ${\cal E}{\textrm{-}q.e.}$ $x\in E$, $P_x(J^{l'}_t\ {\rm converges\ to}\ J_t\ {\rm on\ each\ finite\ interval\ of}\ [0,\infty))=1$ and $P_x(M^{uv,j,l'}_t\ {\rm converges\ to}\ M^{[uv],j}_t\ {\rm on\ each\ finite\ interval\ of}\ [0,\infty))=1$. Therefore (\ref{sd3}) holds.
\end{proof}

\begin{thm} Let $\Phi\in C^2(\mathbb{R}^n)$ and $u_1,\dots,u_n\in D({\cal E})_b$. Then,
\begin{eqnarray}\label{ss1}
& &A_t^{[\Phi(u)]}=\sum_{i=1}^n\int_0^t\Phi_i(u(X_{s-}))dA^{[u_i]}_s+\frac{1}{2}\sum_{i,j=1}^n\Phi_{ij}(u(X_s))d\langle M^{[u_i],c},M^{[u_j],c}\rangle\nonumber\\
& &\ \ \ \ +\sum_{0<s\le t}\left[\Delta \Phi(u(X_s))-\sum_{i=1}^n\Phi_i(u(X_{s-}))\Delta u_i(X_s)\right],
\end{eqnarray}
where
$$
\Phi_i(x)=\frac{\partial\Phi}{\partial x_i}(x),\ \ \Phi_{ij}(x)=\frac{\partial^2\Phi}{\partial x_i\partial x_j}(x),\ \ i,j=1,\dots,n,
$$
and $u=(u_1,\dots,u_n)$.
\end{thm}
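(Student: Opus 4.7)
The plan is the standard two-step argument: first prove (\ref{ss1}) when $\Phi$ is a polynomial, by induction on degree using Theorem \ref{thm1} as the recursion device, and then pass from polynomials to general $\Phi\in C^2(\mathbb{R}^n)$ by approximation on the compact box $K:=\prod_{i=1}^n[-\|u_i\|_\infty,\|u_i\|_\infty]$ containing the range of $u=(u_1,\ldots,u_n)$.

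For the polynomial case, the base $\Phi(x)=x_i$ is just Fukushima's decomposition (\ref{31}). For the inductive step, write a monomial of degree $k+1$ as $u_j\cdot\Psi(u)$ with $\Psi$ a monomial of degree $k$ for which (\ref{ss1}) is already known; both $u_j$ and $w:=\Psi(u)$ lie in $D(\mathcal{E})_b$ by the chain rule for Dirichlet forms, so Theorem \ref{thm1} applies to the pair $(u_j,w)$. Substituting the inductive expression for $A_t^{[w]}$, identifying the continuous martingale part of $A_t^{[w]}$ as $\sum_i\int_0^t\Psi_i(u(X_{s-}))dM^{[u_i],c}_s$ (so that $\langle M^{[u_j],c},M^{[w],c}\rangle_t=\sum_i\int_0^t\Psi_i(u(X_{s-}))d\langle M^{[u_j],c},M^{[u_i],c}\rangle_s$), and using the elementary identity
$$
\Delta(\phi\psi)(X_s)-\psi(X_{s-})\Delta\phi(X_s)-\phi(X_{s-})\Delta\psi(X_s)=\Delta\phi(X_s)\Delta\psi(X_s)
$$
to collapse the jump terms, yields (\ref{ss1}) for the higher-degree monomial. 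Linearity extends the formula to all polynomials.

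For the approximation step, given $\Phi\in C^2(\mathbb{R}^n)$, standard polynomial approximation provides polynomials $\Phi^{(k)}$ with $\Phi^{(k)}$, $\Phi^{(k)}_i$ and $\Phi^{(k)}_{ij}$ converging uniformly on $K$ to $\Phi$, $\Phi_i$ and $\Phi_{ij}$ respectively (for instance, approximate each $\Phi_{ij}$ by Stone-Weierstrass and integrate back twice). Combined with the chain rule for Dirichlet forms, this $C^2$-convergence on $K$ yields $\Phi^{(k)}(u)\to\Phi(u)$ and $\Phi^{(k)}_i(u)\to\Phi_i(u)$ in both $\|\cdot\|_\infty$ and $\tilde{\mathcal{E}}_1^{1/2}$-norm. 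Writing (\ref{ss1}) for each $\Phi^{(k)}$, one passes to the limit along a suitable subsequence: the left-hand side converges a.s.\ uniformly on finite intervals by the analog of \cite[Corollary 5.2.1]{Fu94} invoked in part (i) of the proof of Theorem \ref{pb}; the martingale integrals $\int_0^t\Phi^{(k)}_i(u(X_{s-}))dM^{[u_i]}_s$ converge via an energy estimate and maximal inequality for MAFs; the $N^{[u_i]}$-integrals converge by Corollary \ref{cor1}; and the continuous-bracket terms converge because $\Phi^{(k)}_{ij}(u)\to\Phi_{ij}(u)$ uniformly and each $\langle M^{[u_i],c},M^{[u_j],c}\rangle$ is a continuous process of locally bounded variation.

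The main obstacle will be controlling the jump series uniformly in $k$. Its summand
$$
\Delta\Phi^{(k)}(u(X_s))-\sum_{i=1}^n\Phi^{(k)}_i(u(X_{s-}))\Delta u_i(X_s)
$$
is a second-order Taylor remainder bounded by $\frac{1}{2}\sup_K|D^2\Phi^{(k)}|\cdot|\Delta u(X_s)|^2$, and since $\sup_k\sup_K|D^2\Phi^{(k)}|<\infty$ while $\sum_{0<s\le t}|\Delta u_i(X_s)|^2<\infty$ $P_x$-a.s.\ for each $i$ (from the finite quadratic variation of the MAFs $M^{[u_i]}$), dominated convergence takes the $\Phi^{(k)}$-jump series to its $\Phi$-counterpart, completing the argument.
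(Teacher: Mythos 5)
Your proposal follows essentially the same route as the paper's (very terse) proof: establish (\ref{ss1}) for polynomials by induction via Theorem \ref{thm1}, then approximate a general $\Phi\in C^2(\mathbb{R}^n)$ by polynomials converging together with first and second derivatives uniformly on a cube containing the range of $u$, passing to the limit term by term with Corollary \ref{cor1} and the energy estimate of \cite[(3.2.27)]{Fu94}. Your additional details — the identification of $M^{[\Psi(u)],c}$ in the inductive step and the domination of the jump series by $\sup_V|D^2\Phi^{(k)}|\sum_s|\Delta u(X_s)|^2$ — are correct and fill in steps the paper leaves implicit.
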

\begin{proof} By Theorem \ref{thm1} and induction, we know that (\ref{ss1}) holds for any polynomial $\Phi$. Denote by $V$ a finite cube containing the range of $u$. For a general $\Phi\in C^2(\mathbb{R}^n)$ vanishing at the origin, we choose a sequence of polynomials $\{\Phi_r\}$ vanishing at the origin such that $\Phi_r\rightarrow\Phi$, $(\Phi_r)_i\rightarrow\Phi_i$, $(\Phi_r)_{ij}\rightarrow\Phi_{ij}$, $1\le i,j\le n$, uniformly on $V$ as $r\rightarrow\infty$. By virtue of the approximation sequence $\{\Phi_r\}$, \cite[(3.2.27)]{Fu94} and Corollary \ref{cor1}, we find that (\ref{sd1}) holds for $\Phi$.
\end{proof}

\bigskip

{ \noindent {\bf\large Acknowledgments} \vskip 0.1cm  \noindent We are
grateful to the support of NSFC (Grant No. 10961012) and
NSERC (Grant No. 311945-2008).}
\vskip 0.5cm
{ \noindent {\bf\large Note:} After this paper was submitted to arXiv.org, Dr. Alexander Walsh Zuniga emailed us and told us that he read our paper in arXiv and found that some results given in this paper were the same as the corresponding results given in Chapter 5 of his completed PhD thesis. Both Dr. Zuniga and the authors of this paper have agreed that the works of the two groups are independent and the methods are different. We thank Dr. Zuniga very much for letting us know his thesis, which can be found at
http://tel.archives-ouvertes.fr/tel-00627558\hskip -0.03cm\_\hskip 0cm v1/ .


\end{document}